\documentclass[leqno]{aims}

\usepackage{amsmath, amsfonts, amssymb}
\usepackage{paralist}
\usepackage{graphics}
\usepackage{epsfig}
 \usepackage[colorlinks=true]{hyperref}
\hypersetup{urlcolor=blue, citecolor=red}

  \textheight=8.2 true in
   \textwidth=5.0 true in
    \topmargin 30pt
     \setcounter{page}{1}




\newtheorem{theorem}{Theorem}[section]

\newtheorem{lemma}[theorem]{Lemma}

\theoremstyle{definition}

\newtheorem{remark}{Remark}[section]

\numberwithin{equation}{section}

\def\bfR{\mathbb{R}}

\def\mcK{\mathcal{K}}
\def\mcB{\mathcal{B}}

\title[Global Solvability for 1.5D VLASOV MAXWELL]{Separated Characteristics and Global Solvability for the one and one-half dimensional Vlasov Maxwell System}

\author[Robert Glassey \\
Stephen Pankavich\\
Jack Schaeffer]{}

\subjclass{Primary: 35L60, 35Q83, 35Q99; Secondary: 82C21, 82C22, 82D10}
 \keywords{Kinetic Theory, Vlasov-Maxwell, global existence}

 \email{glassey@indiana.edu} 
 \email{pankavic@mines.edu}
 \email{js5m@andrew.cmu.edu}

 \thanks{This work was supported in part by the National Science Foundation under Grant DMS-1211667.}

\begin{document}

\maketitle

\centerline{\scshape Robert Glassey}
\medskip
{\footnotesize
 \centerline{Department of Mathematics}
 \centerline{Indiana University}
 \centerline{Bloomington, IN  47405 USA}
}

\bigskip

\centerline{\scshape Stephen Pankavich}
\medskip
{\footnotesize
 \centerline{Department of Applied Mathematics and Statistics}
 \centerline{Colorado School of Mines}
 \centerline{Golden, CO 80002 USA}
} %

\bigskip

\centerline{\scshape Jack Schaeffer}
\medskip
{\footnotesize
 \centerline{Department of Mathematical Sciences}
 \centerline{Carnegie Mellon University}
 \centerline{Pittsburgh, PA 15213 USA}

} 

\medskip

\begin{abstract}
The motion of a collisionless plasma - a high-temperature, low-density, ionized gas - is described by the Vlasov-Maxwell (VM) system. These equations are considered in one space dimension and two momentum dimensions without the assumption of relativistic velocity corrections. The main results are bounds on the spatial and velocity supports of the particle distribution function and uniform estimates on derivatives of this function away from the critical velocity $\vert v_1 \vert = 1$.  Additionally, for initial particle distributions that are even in the second velocity argument $v_2$, the global-in-time existence of solutions is shown.  
\end{abstract}

\section{Introduction}
A plasma is a partially or completely ionized gas.  When a plasma is of low density or the time scales of interest are sufficiently small, it is deemed to be ``collisionless'', as collisions between particles become infrequent.  
The fundamental equations which describe the time evolution of a collisionless plasma are given by the three-dimensional Vlasov-Maxwell system:
\begin{equation} \label{VM} \tag{VM} \left \{ \begin{gathered} \partial_t f + v \cdot \nabla_x f + \left (E + v \times B \right ) \cdot \nabla_v f = 0 \\  \rho(t,x) = \int f(t,x,v) \ dv, \quad j(t,x)= \int v  f(t,x,v) \ dv \\  \partial_t E = \nabla \times B - j, \qquad \nabla \cdot E = \rho \\  \partial_t B = - \nabla \times E,
\qquad \nabla \cdot B = 0. \\ \end{gathered} \right.
\end{equation}
This nonlinear system of integro-differential equations is supplemented by a set of initial conditions
$f(0,x,v) = f^0(x,v), E(0,x) = E^0(x)$, and $B(0,x) = B^0(x).$
Here, $f$ represents the density of (positively-charged) ions in the plasma, while $\rho$ and $j$ are the charge and current densities, and $E$ and $B$ represent electric and magnetic fields generated by the charge and current.  The independent variables, $t \geq 0$ and $x,v \in \bfR^3$ represent time, position, and velocity, respectively, and physical constants, such as 
the speed of light $c$, have been normalized.  In the presence of large velocities, relativistic corrections may be necessary. The corresponding system to consider is then the relativistic analogue of (\ref{VM}), denoted by (RVM) and constructed by replacing $v$ with $\hat{v} = \frac{v}{\sqrt{1 + \vert v \vert^2}}$ within the first equation of (\ref{VM}), called the Vlasov equation, and the integrand of the current $j$.  
For a general reference concerning kinetic models of plasma dynamics, such as \eqref{VM} and (RVM), see \cite{Glassey, VKF}.  

Over the years some progress has been made in the analysis of (RVM), specifically establishing the global existence of weak solutions (which also holds for (VM); see \cite{DPL}) and determining a sufficient condition which ensures global existence of classical solutions 
for the Cauchy problem \cite{GlStr}.
In lower-dimensional settings, this condition has been shown to hold \emph{a priori} \cite{GlaSch90, GlaSch97, GlaSch981, GlaSch982}. 
Additionally, a wide array of information has been discovered regarding the electrostatic versions of both (\ref{VM}) and (RVM), known as the Vlasov-Poisson and relativistic Vlasov-Poisson systems, respectively.  These models do not include magnetic effects
, and the electric field is given by an elliptic equation, rather than a hyperbolic system of PDEs. This simplification has led to a great deal of progress concerning the electrostatic systems, including theorems regarding global existence and long-time behavior of solutions 
\cite{KRM, MMAS, LP, Pfaff, Jack}. However, a global existence theorem for classical solutions stemming from arbitrary data in the relativistic case has remained elusive.  Independent of these advances, many of the most basic 
well-posedness questions remain unsolved for (\ref{VM}), and few results exist within the literature, with \cite{GPSRVM, GlaStr} representing exceptions.  The main difficulty which arises is the loss of strict hyperbolicity of the kinetic system due to the possibility that particle velocities $v$ may travel faster than the propagation of signals from the electric and magnetic fields, which do so at the speed of light $c = 1$ (cf. \cite{PN}). As one can see, this difficulty is remedied physically by the inclusion of relativistic velocity corrections which uniformly constrain velocities $\vert \hat{v} \vert < 1$.  In many macroscopic physical systems one does not consider the effects of special relativity, but at the kinetic level such velocity corrections may play a fundamental role, even in the basic well-posedness of solutions.  Hence, one of the primary goals of the current work is to understand how this 
affects such properties, and establish a precise result that guarantees the continued smoothness of solutions as long as velocity characteristics do not assume magnitudes that approach $c$.

Often a remedy to the lack of progress on such a problem is to reduce the dimensionality of the system.  
The lowest-dimensional reduction which retains magnetic effects is the so-called ``one-and-one-half-dimensional'' Vlasov-Maxwell system which is constructed by taking $x \in \bfR$ but $v \in \bfR^2$, yielding the system of PDEs
\begin{equation} \tag{1.5D VM} \label{1.5DVM} \left \{ \begin{gathered}
 \partial_t f + v_1\partial_x f + K \cdot \nabla_v f = 0\\
 K = \langle E_1 + v_2 B, E_2 - v_1 B \rangle, \qquad j(t,x) = \int v f \ dv\\
 \partial_x E_1 = \rho(t,x) = \int f dv - b(x), \qquad \partial_t E_1 = - j_1 \\
\partial_t E_2 = -\partial_x B - j_2, \qquad
\partial_t B = - \partial_x E_2
\end{gathered} \right.
\end{equation}
and initial conditions
\begin{equation}
\label{IC}
\tag{IC}
f(0,x,v) = f^0(x,v), \qquad E_2(0,x) = E_2^0(x), \qquad B(0,x) = B^0(x).
\end{equation}
Here, $B$ is now a scalar field and the associated electric field possesses only two components, $E_1$ and $E_2$. 
Additionally, the given, neutralizing background density $b$ is included in order to study solutions with finite energy.  If one takes $b \equiv 0$, then $E_1 \not\in L^2(\bfR)$ and solutions necessarily possess infinite energy.
Surprisingly, the question of classical regularity of solutions remains open even in this simplified case.  
%
The fundamental issue of \eqref{VM} persists within \eqref{1.5DVM}, namely that the reduced Vlasov characteristics in the density equation propagate at an uncontrollable speed $\vert v_1 \vert$ and hence, are able to intersect the field characteristics which propagate with speed $c=1$.  Though we cannot currently prove that all initial data launch a global-in-time solution, we can provide an answer for certain classes of initial data (see Section $3$) and establish uniform bounds on derivatives arbitrarily close to this possible intersection of characteristics.

This paper proceeds as follows.  In the next section, we will derive \emph{a priori} estimates in order to prove the main result.  The first lemma obtains bounds on the spatial and velocity supports of the particle distribution and the associated fields, while the main theorem guarantees a uniform bound on derivatives of the distribution function as long as particle velocities do not approach the set on which $\vert v_1 \vert = 1$. The proofs of these results then follow in the latter portion of the section.
Finally, in Section $3$, we present a theorem concerning global existence for initial particle densities that are even in $v_2$.  We show that this symmetry property is preserved in time by solutions of \eqref{1.5DVM} and a reduction in complexity occurs which allows us to conclude that smooth solutions exist globally in time.

Throughout the paper the value $C > 0$ will denote a generic constant that may change from line to line and depend upon the neutralizing density $b$, existence time $T$, and initial data \eqref{IC}.  When necessary, we will specifically identify a constant with a subscript (e.g., $C_1$).
Finally, since we are interested in classical solutions, we will assume $f^0, E_2^0, B^0 \in C_c^1(\bfR^2)$ for the entirety of the paper.

\section{\emph{A priori} Estimates}

To begin this section, we will first prove a lemma that bounds the support of the particle density and the associated electric and magnetic fields.

\begin{lemma}
\label{L1}
Let $T > 0$ be given and assume that $(f, E, B)$ is a $C^1$ solution of (\ref{1.5DVM}) on $[0,T)$ with 
$f_0\geq 0$, and $b \in C^1_c(\bfR)$ satisfying the global neutrality assumption
$$ \iint f^0(x,v) \ dv \ dx = \int b(x) \ dx.$$
Then, there exists $C > 0$, 
such that
$f(t,x,v) \neq 0$ for $ t\in [0,T), x \in \bfR$, and $v \in \bfR^2$ implies
\begin{equation}
\label{suppbound}
\vert x \vert + \vert v \vert \leq C.
\end{equation}
Additionally, there is $C > 0$ such that
$$ \vert E(t,x) \vert + \vert B(t,x) \vert \leq C$$
for all $t \in [0,T), x \in \bfR$.
\end{lemma}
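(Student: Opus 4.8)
The plan is to get the velocity bound first, deduce the spatial bound by integrating the first characteristic ODE, and extract the field bounds last. I would start from the two conservation laws. Since the Vlasov vector field is divergence free in $(x,v)$ (one checks $\partial_{v_1}(E_1+v_2B)+\partial_{v_2}(E_2-v_1B)=0$), the total mass $\iint f\,dv\,dx$ is conserved; together with the neutrality hypothesis this gives $E_1(t,x)=\int_{-\infty}^{x}\rho(t,y)\,dy$ with $\|\rho(t)\|_{L^1}\le \iint f^0\,dv\,dx+\|b\|_{L^1}$, so that $\|E_1(t)\|_{\infty}\le C$ for free. Conservation of energy, $\iint |v|^2 f\,dv\,dx+\int(E_1^2+E_2^2+B^2)\,dx=\text{const}$, then yields $\|E_2(t)\|_{L^2}+\|B(t)\|_{L^2}\le C$ and, by Cauchy--Schwarz, $\|j_2(t)\|_{L^1}\le\big(\iint v_2^2 f\big)^{1/2}\big(\iint f\big)^{1/2}\le C$.

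Next I would pass to the characteristics $\dot X=V_1$, $\dot V_1=E_1+V_2B$, $\dot V_2=E_2-V_1B$. The magnetic force does no work, so $\tfrac{d}{ds}|V|^2=2(V_1E_1+V_2E_2)$, and because $E_1$ is already controlled the entire problem reduces to estimating $E_2$ along trajectories. I would diagonalize the Maxwell block by setting $p=E_2+B$ and $q=E_2-B$, which satisfy $(\partial_t+\partial_x)p=-j_2$ and $(\partial_t-\partial_x)q=-j_2$; integrating $\tfrac{d}{dt}\|p(t)\|_{L^1}\le\|j_2(t)\|_{L^1}$ (and likewise for $q$) gives $\|p(t)\|_{L^1}+\|q(t)\|_{L^1}\le C(1+T)$. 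The decisive algebraic fact is that the transverse force rearranges as $\dot V_2=\tfrac12[(1-V_1)p+(1+V_1)q]$, so each field enters weighted by the relative speed $|1\mp V_1|$ of the particle against the corresponding light cone. Representing the field along its own characteristic, $p(s,s+\xi)=p(0,\xi)-\int_0^s j_2(\sigma,\sigma+\xi)\,d\sigma$ and symmetrically for $q$, I would introduce the crossing variables $\xi_\pm(s)=X(s)\mp s$, for which $\dot\xi_\pm=V_1\mp1$.

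With these ingredients I would close a continuation/bootstrap for $P(t):=\sup\{|v|:\ f(s,x,v)\neq0,\ s\le t\}$, which is finite for every $t<T$ because the $C^1$ flow carries the compact set $\supp f^0$ to a compact set at each later time; the content of \eqref{suppbound} is precisely that $P$ does not blow up as $t\to T^-$. The weighted force integrals take the form $\int_0^t|\dot\xi_\pm|\,|p|,|q|\,ds$, and a coarea change of variables in $\xi_\pm$ turns these into integrals of the fields over the light cones crossed, in which the relative-speed weight $|1\mp V_1|$ exactly cancels the crossing Jacobian $|V_1\mp1|$. Fed with the $L^1$ field bounds, this is designed to produce a Gronwall inequality for $P$, and hence $P(t)\le C$ on $[0,T)$; the spatial estimate then follows from $|X(t)|\le|X(0)|+\int_0^t|V_1|\,ds\le C$, which with the velocity bound gives \eqref{suppbound}.

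I expect the crux to be exactly this coarea step, which is the concrete form of the difficulty advertised in the introduction: the Vlasov characteristics travel at the uncontrolled speed $|v_1|$ and may intersect the field characteristics, which move at speed one. The naive estimate of the current along a light ray carries a factor $1/|V_1\mp1|$ singular at the critical speed $|v_1|=1$, and the whole point is that the relative-speed weighting removes this pointwise singularity; after the change of variables one is left with $\int_{\bfR} G_\pm(\xi)\,n_\pm(\xi)\,d\xi$, where $G_\pm$ collects the $L^1$ field data and $n_\pm$ is the crossing count of $\xi_\pm$, and the remaining task is to control this accumulation uniformly on $[0,T)$ using the conservation bounds. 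Once $P$ and the supports are in hand, I would return to the representation formulas for $p$ and $q$ and estimate $j_2$ along each ray by the same crossing analysis, now with the velocity support controlled, to obtain $\|E_2(t)\|_\infty+\|B(t)\|_\infty\le C$; combined with the earlier bound on $E_1$ this yields the asserted field estimate. Thus the single delicate step of bounding the interaction integrals at $|v_1|=1$ drives both conclusions of the lemma.
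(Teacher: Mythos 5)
Your preparatory steps are all correct, and several of them coincide with the paper's own proof: the $E_1$ bound from charge conservation, conservation of energy, the diagonalization $p=E_2+B$, $q=E_2-B$, and the algebraic identity $\dot V_2=\tfrac12[(1-V_1)p+(1+V_1)q]$ are all sound. But the step you yourself flag as ``the remaining task'' --- controlling the accumulation integral $\int G_\pm(\xi)\,n_\pm(\xi)\,d\xi$ --- is not a technical remainder; it is the entire difficulty of the lemma, and it cannot be closed with the ingredients you have assembled. After the Jacobian cancellation you know only that $G_\pm\in L^1$ (from the $L^1$ bounds on $p(0,\cdot)$, $q(0,\cdot)$ and $\int_0^T\Vert j_2(\sigma)\Vert_{L^1}\,d\sigma$) and that $\int n_\pm(\xi)\,d\xi=\int_0^t\vert V_1\mp1\vert\,ds\leq C\bigl(1+\int_0^t P(s)\,ds\bigr)$ (the Banach indicatrix identity). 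An $L^1$--$L^1$ pairing is not integrable. To finish you would need either $\Vert G_\pm\Vert_\infty<\infty$, i.e.\ a pointwise-in-$\xi$ bound on $\int_0^T\vert j_2(\sigma,\sigma\pm\xi)\vert\,d\sigma$, the current integrated along a single null ray --- exactly the quantity that is \emph{not} controllable in the non-relativistic problem, since a particle with $V_1\approx 1$ can ride along the ray and deposit an arbitrarily large amount of current on it --- or else $\Vert n_\pm\Vert_\infty<\infty$, i.e.\ a bound on how often $V_1$ crosses $\pm1$, which requires control of $\dot V_1=E_1+V_2B$ and hence of $V_2$ and $B$: circular. (In the relativistic 1.5D system of Glassey--Schaeffer this scheme does work, precisely because $\dot\xi_-=\hat V_1-1<0$ strictly, so each ray is crossed exactly once; the loss of that monotonicity at $\vert v_1\vert=1$ is the obstruction announced in the introduction, and your argument runs straight into it.)

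The paper's proof sidesteps ray integrals entirely with a device absent from your proposal: the potential $A(t,x)=\int_{-\infty}^xB(t,y)\,dy$ satisfies $\partial_tA=-E_2$ and $(\partial_{tt}-\partial_{xx})A=j_2$, and the canonical momentum $V_2(s)+A(s,X(s))$ is \emph{exactly conserved} along characteristics. A pointwise bound $\vert A\vert\leq C$ follows from d'Alembert's formula, because there $j_2$ is integrated over the two-dimensional interior of the backward light cone, where the interpolation estimate $\vert j_2\vert\leq C\bigl(\int\vert v\vert^2f\,dv\bigr)^{3/4}$ plus H\"older's inequality and conservation of energy suffice; this two-dimensional spacetime integral is what replaces your one-dimensional ray integrals, which H\"older cannot handle. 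Conservation of $v_2+A$ then gives $\vert v_2\vert\leq C$ on the support at once, whence $\vert j_2\vert\leq CQ(t)$, then $\vert E_2\vert+\vert B\vert\leq CQ(t)$ by integrating the transport equations for $p$ and $q$, and finally Gronwall applied to $Q$ closes the argument with no crossing analysis at all. If you wish to repair your proof, this conserved quantity is the missing idea.
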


With this result in hand, we may further obtain bounds on derivatives of the particle distribution function.
\begin{theorem}
\label{T1}
Let the assumptions of Lemma \ref{L1} hold. Then, for any $\epsilon > 0$ the quantity $\vert \partial_x f \vert + \vert \nabla_v f \vert$ is uniformly bounded on the set
$$S_\epsilon := \left  \{ (t,x,v) \in [0,T) \times \bfR \times \bfR^2 : \biggl \vert \vert v_1 \vert - 1 \biggr \vert > \epsilon \right \}.$$ 
\end{theorem}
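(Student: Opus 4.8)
The plan is to derive a linear system of ODEs for the first derivatives of $f$ along the Vlasov characteristics and to close a Gronwall estimate for it; essentially all of the difficulty is concentrated in controlling $\partial_x E_2$ and $\partial_x B$, and this is precisely where the hypothesis $|v_1|\neq 1$ enters. First I would introduce the characteristics $s\mapsto(X(s),V_1(s),V_2(s))$ solving $\dot X=V_1$, $\dot V_1=E_1+V_2B$, $\dot V_2=E_2-V_1B$ (fields evaluated at $(s,X(s))$), along which $f$ is constant. Differentiating the Vlasov equation in $x$, $v_1$, $v_2$ and using $\partial_{v_1}K_1=\partial_{v_2}K_2=0$, $\partial_{v_1}K_2=-B$, $\partial_{v_2}K_1=B$, one obtains along each characteristic
\[
\frac{d}{ds}\partial_xf=-(\partial_xE_1+V_2\partial_xB)\,\partial_{v_1}f-(\partial_xE_2-V_1\partial_xB)\,\partial_{v_2}f,
\]
\[
\frac{d}{ds}\partial_{v_1}f=-\partial_xf+B\,\partial_{v_2}f,\qquad \frac{d}{ds}\partial_{v_2}f=-B\,\partial_{v_1}f.
\]
By Lemma~\ref{L1} the fields are bounded, so $B$ is a harmless coefficient, and $\partial_xE_1=\rho=\int f\,dv-b$ is bounded as well. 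Thus the only dangerous quantities are $\partial_xE_2$ and $\partial_xB$.

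The key step is to express these two derivatives through quantities that are already controlled, using the field equations along the characteristic. Writing $\frac{d}{ds}E_2(s,X(s))=\partial_tE_2+V_1\partial_xE_2$ and similarly for $B$, the Maxwell part of \eqref{1.5DVM} yields the linear system
\[
\frac{d}{ds}B=-\partial_xE_2+V_1\partial_xB,\qquad \frac{d}{ds}E_2=-\partial_xB-j_2+V_1\partial_xE_2,
\]
whose determinant in the unknowns $(\partial_xE_2,\partial_xB)$ is $1-V_1^2$. Solving gives
\[
\partial_xB=\frac{\tfrac{d}{ds}E_2+V_1\tfrac{d}{ds}B+j_2}{V_1^2-1},\qquad \partial_xE_2=\frac{V_1\tfrac{d}{ds}E_2+\tfrac{d}{ds}B+V_1j_2}{V_1^2-1}.
\]
On $S_\epsilon$ one has $|V_1^2-1|=\bigl||V_1|-1\bigr|\,(|V_1|+1)>\epsilon$, so the prefactor $1/(V_1^2-1)$ is bounded by $\epsilon^{-1}$; this is exactly the separation of the particle speed $V_1$ from the signal speed $\pm1$.

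Then I would remove the total derivatives $\frac{d}{ds}E_2$ and $\frac{d}{ds}B$ by passing to a modified unknown. Since in the equation for $\frac{d}{ds}\partial_xf$ the field derivatives appear only through $\partial_xE_2-V_1\partial_xB=-\frac{d}{ds}B$ and through $\partial_xB$, one may set
\[
w:=\partial_xf+\frac{V_2}{V_1^2-1}\bigl(E_2+V_1B\bigr)\partial_{v_1}f-B\,\partial_{v_2}f,
\]
chosen precisely so that, upon differentiating $w$ along the characteristic, every term carrying $\frac{d}{ds}E_2$ or $\frac{d}{ds}B$ cancels. What remains is a combination of $w,\partial_{v_1}f,\partial_{v_2}f$ with coefficients built from $E_2,B,\rho,j_2,V_1,V_2$ and $1/(V_1^2-1)$, all bounded on $S_\epsilon$ by Lemma~\ref{L1} (the derivatives $\dot V_1,\dot V_2$ that arise are the bounded forces). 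Hence $(w,\partial_{v_1}f,\partial_{v_2}f)$ solves a closed linear ODE system with coefficients bounded by some $C(\epsilon)$, and since $w$ differs from $\partial_xf$ only by bounded multiples of $\nabla_vf$, a Gronwall argument back to $t=0$ (where the data lie in $C^1_c$) produces the uniform bound on $|\partial_xf|+|\nabla_vf|$.

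The main obstacle is that this argument controls the derivatives only along characteristics that remain in the separated region, because the coefficients blow up like $1/(V_1^2-1)$ as $|V_1|\to1$. A characteristic reaching $S_\epsilon$ at time $t$ may have passed through $|V_1|=1$ at some earlier time, and there $\frac{d}{ds}V_1=E_1+V_2B$ need not vanish, so $1/(V_1^2-1)$ is genuinely non-integrable in $s$ across such a crossing. Confronting this resonance—equivalently, arguing that one may restrict to, or patch along, portions of the flow on which $|V_1|$ stays bounded away from $1$—is the delicate point; it is exactly the mechanism that blocks the estimate from crossing $|v_1|=1$ and that leaves global regularity open.
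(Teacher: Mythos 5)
Your derivative system along characteristics is correct, and your algebraic device for the field derivatives is genuinely different from the paper's: you invert the $2\times 2$ system relating $(\partial_x E_2,\partial_x B)$ to the total derivatives $\tfrac{d}{ds}E_2$, $\tfrac{d}{ds}B$ along the characteristic (determinant $V_1^2-1$), then absorb those total derivatives into a modified unknown $w$. The paper instead never bounds $\partial_x E_2$, $\partial_x B$ pointwise: it bounds the \emph{time integral} $\mcK(u,t_2)=\int_u^{t_2}\partial_x K\,ds$, using the identity $\partial_x K_2=-\tfrac{d}{ds}[B(s,X(s))]$ for the second component, and for the first component a d'Alembert representation of $B$ together with the Glassey--Strauss-type splitting of $\partial_x f$ into a convex combination weighted by $\theta=(V_1(s)-1)^2/D$, $D=(V_1(s)-1)^2+(v_1-1)^2$. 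Your route is tighter and avoids the light-cone machinery, and I believe the cancellation in $\tfrac{d}{ds}w$ works as you claim.

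However, there is a genuine gap, and you name it yourself without closing it: your Gronwall argument runs ``back to $t=0$,'' which requires the coefficients (which carry $1/(V_1^2-1)$ and its $s$-derivative) to be bounded along the \emph{entire} backward characteristic, and a characteristic landing in $S_\epsilon$ at time $t$ may well have crossed $|V_1|=1$ earlier. Declaring this ``the delicate point'' that ``leaves global regularity open'' conflates two different issues: it is what blocks a \emph{global existence} proof, but it does not block Theorem \ref{T1}, and the paper's proof resolves it by a simple device you are missing. Since $(f,E,B)$ is a $C^1$ solution on $[0,T)$ and $f$ has uniformly compact support (Lemma \ref{L1}), the quantity $\Vert\nabla_{x,v}f(\tau)\Vert:=\sup_{[0,\tau]}\Vert\nabla_{x,v}f\Vert_\infty$ is finite for every $\tau<T$; hence only times $t\in(T-\epsilon,T)$ require an argument at all. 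For such $t$, one does not integrate back to $0$ but only to $t_\epsilon=t-\epsilon/C_1$: because the force $K$ is bounded on $\supp f$ (Lemma \ref{L1}), choosing $C_1$ large ensures $\bigl\vert\,\vert V_1(s)\vert-1\,\bigr\vert>\epsilon/2$ on all of $[t_\epsilon,t]$, so no crossing can occur on the interval actually used. Gronwall over this short interval, with the finite quantity $\Vert\nabla_{x,v}f(T-\epsilon/C_1)\Vert$ as initial data, yields the uniform bound on $S_\epsilon$. With this local-in-time lookback inserted, your $2\times 2$ inversion and modified unknown $w$ (whose coefficients are then bounded by powers of $C/\epsilon$ on $[t_\epsilon,t]$) would give a complete and arguably cleaner alternative proof; without it, the estimate as written does not close.
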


Prior to proving these results, we introduce a few other quantities necessary for the subsequent analysis.  First, we define characteristics for the Vlasov equation.  These are the curves $X(s,t,x,v)$ and $V(s,t,x,v)$ satisfying
$$\left\{
\begin{gathered}
\frac{\partial X}{\partial s}=V_1, \qquad
X(t,t,x,v)=x, \\
\frac{\partial V_1}{\partial s}=E_1(s,X) + V_2 B(s,X), \qquad
V_1(t,t,x,v)=v_1\\
\frac{\partial V_2}{\partial s}=E_2(s,X) - V_1 B(s,X), \qquad
V_2(t,t,x,v)=v_2.\\
\end{gathered} \right.
$$
Often, the $(t,x,v)$ dependence of these curves will be suppressed so, for example, $X(s,t,x,v)$ will be denoted by $X(s)$ for brevity.
Then, the Vlasov equation can be expressed as a derivative along the characteristic curves by $\frac{d}{ds} f(s, X(s), V(s)) = 0$. Thus, we find $$f(t,x,v) = f^0(X(0,t,x,v), V(0,t,x,v)) \in [0,C] .$$

\begin{remark}
If it can be shown that velocity characteristics remain bounded away from the set $\vert V_1(s) \vert = 1$ for all $s \geq 0$, then Theorem \ref{T1} implies the global existence of smooth solutions.
\end{remark}

Next, define the potential
$$A(t,x) = \int_{-\infty}^x B(t,y) \ dy,$$
and notice that 
$$\partial_t A = \int_{-\infty}^x \left ( - \partial_x E_2(t,y) \right ) \ dy = -E_2(t,x)$$
and
\begin{equation}
\label{waveA}
\left (\partial_{tt} - \partial_{xx} \right) A = - \partial_t E_2 - \partial_x B = j_2.
\end{equation}
The potential will be used throughout this section because it satisfies the important identity
$$\frac{d}{ds} \left [ V_2(s) + A(s,X(s)) \right] = 0$$
so that
\begin{equation}
\label{v2A}
v_2 + A(t,x) = V_2(0) + A(0,X(0)).
\end{equation}
Now, we may prove Lemma \ref{L1}.

\begin{proof}
To prove the initial result, we first define a function to serve as an upper bound on the maximal velocity support
$$Q(t) : = 1 + \sup \left \{ \vert v \vert : \mathrm{\ there \ exist \ } \tau \in [0,t], x \in \bfR \mathrm{\ such \ that \ } f(\tau, x, v) \neq 0 \right \}$$
and the blow-up time
$$\bar{T} = \sup \left \{ t \in [0,T): Q(t) \mathrm{ \ is \ finite } \right \}.$$
Then, $\bar{T} > 0$ and $Q:[0,\bar{T}) \to [0,\infty)$ is continuous and nondecreasing.  Finally, if $\bar{T} < T$ then
$Q(t) \to \infty$ as $t \to \bar{T}^-$.

Consider any $T_1 \in [0,\bar{T})$.
For $t \in [0,T_1]$ it is straightforward to check the energy conservation identity
$$\frac{d}{dt} \left ( \iint \vert v \vert^2 f(t,x,v) \ dv \ dx + \int \left ( \vert E(t,x) \vert^2 + B(t,x)^2 \right) \ dx \right ) = 0$$
so that
$$\iint \vert v \vert^2 f(t,x,v) \ dv \ dx + \int \left (\vert E(t,x) \vert^2 + B(t,x)^2 \right ) \ dx = C.$$
Using the supremum bound on $f$ and the kinetic energy portion of the identity, we estimate $j$.  So, for every $P > 0$
\begin{eqnarray*}
\vert j(t,x) \vert & \leq & \int_{\vert v \vert < P} \vert v \vert f \ dv + P^{-1} \int_{\vert v \vert > P} \vert v \vert^2 f \ dv\\
& \leq & CP^3 + P^{-1} \int \vert v \vert^2 f \ dv.
\end{eqnarray*}
Taking $P = \left ( \int \vert v \vert^2 f \ dv \right )^{1/4}$ then yields
\begin{equation}
\label{jest}
\vert j(t,x) \vert \leq C  \left ( \int \vert v \vert^2 f(t,x,v) \ dv \right )^{3/4}.
\end{equation}
Note that \eqref{jest} holds if $j=0$ also.  
By \eqref{waveA}, \eqref{jest}, and H\"{o}lder's inequality we have
\begin{eqnarray*}
\vert A(t,x) \vert & = & \frac{1}{2} \left \vert A(0,x+t) + A(0,x-t) - \int_{x-t}^{x+t} E_2(0,y) \ dy + \int_0^t \int_{x-t+\tau}^{x+t-\tau} j_2(\tau,y) \ dy \ d\tau \right \vert\\
& \leq & C \left ( 1 + \int_0^t \int_{x-t+\tau}^{x+t-\tau} \left (\int \vert v \vert^2 f(\tau,y, v) \ dv \right )^{3/4} \ dy \ d\tau \right)\\
& \leq & C \left ( 1 + \left (\int_0^t \int \int \vert v \vert^2 f(\tau,y, v) dv \ dy \ d\tau  \right )^{3/4} \left (\int_0^t \int_{x-t+\tau}^{x+t-\tau} \ dy \ d\tau \right )^{1/4} \right)\\
& \leq & C.
\end{eqnarray*}

Now, if $f(t,x,v) \neq 0$ then by \eqref{v2A} we find
$$\vert v_2 + A(t,x) \vert = \vert V_2(0,t,x,v) + A(0,X(0,t,x,v)) \vert \leq C$$
so that 
\begin{equation}
\label{v2supp}
\vert v_2 \vert \leq \vert v_2 + A(t,x) \vert + \vert A(t,x) \vert \leq C.
\end{equation}
This bound then yields a spatially-uniform bound on $j_2$
$$\vert j_2(t,x) \vert \leq \int_{-Q(t)}^{Q(t)} \int_{-C}^{C} \vert v_2 \vert f \ dv_2 \ dv_1 \leq CQ(t).$$

Next, we note that the field equations in \eqref{1.5DVM} imply
$$\begin{gathered}
(\partial_t + \partial_x) (E_2 + B) = -j_2\\
(\partial_t - \partial_x) (E_2 - B) = - j_2.
\end{gathered}$$
The first of these equations yields the bound
$$\vert E_2(t,x) + B(t,x) \vert = \left \vert E_2(0,x-t) + B(0,x-t) - \int_0^t j_2(\tau, x-t+\tau) \ d\tau \right \vert \leq CQ(t)$$
and an identical bound for $\vert E_2 - B \vert$ follows from the second equation in the same manner.
Using these together, we find
\begin{equation}
\label{E2Bbound}
\vert E_2(t,x) \vert + \vert B(t,x) \vert \leq C Q(t).
\end{equation}
In addition, a bound on the first component of the electric field arises from charge conservation.
In particular, integrating the Vlasov equation over phase space yields
$$\frac{d}{dt} \iint f(t,x,v) \ dv \ dx = 0$$
and thus
$$\iint f(t,x,v) \ dv \ dx = \iint f^0(x,v) \ dv \ dx.$$
With this, we find
\begin{eqnarray*}
\vert E_1(t,x) \vert & = & \left \vert \int_{-\infty}^x \left ( \int f(t,y,v) \ dv - b(y) \right ) \ dy \right \vert\\
& \leq & \iint f(t,y,v) \ dv \ dy + \int \vert b(y) \vert \ dy\\
& = & \iint f^0(y,v) \ dv \ dy + \int \vert b(y) \vert \ dy\\
& \leq & C.
\end{eqnarray*}
Hence, if $f(t,x,v) \neq 0$, then
$$\left \vert \frac{dV_1}{ds} \right \vert = \left \vert E_1(s, X) + V_2 B(s,X) \right \vert \leq C Q(s)$$
and therefore
$$\vert v_1 \vert \leq \left \vert V_1(0,t,x,v) + \int_0^t \frac{dV_1}{ds} \ ds \right \vert \leq C\left (1 + \int_0^t Q(s) \ ds \right ).$$
Combining this with the bound on the $v_2$-support in \eqref{v2supp}, it follows that
$$Q(t) \leq C\left ( 1 + \int_0^t Q(s) \ ds \right ).$$
By Gronwall's inequality, we conclude
$$Q(t) \leq C$$
and \eqref{suppbound} follows directly from this. Finally, the field bounds follow precisely from this estimate and \eqref{E2Bbound}. Similarly, $\rho$ and $j$ are controlled by the estimate on $Q(t)$.
\end{proof}

To conclude this section, we prove Theorem \ref{T1}.
\begin{proof}
For any $t \in [0,T)$ define the norm
$$\Vert \nabla_{x,v} f(t) \Vert := \sup_{\tau \in [0,t]} \Vert \nabla_{x,v} f(\tau)  \Vert_\infty $$
and note that the function $t \to \Vert \nabla_{x,v} f(t) \Vert$ is continuous and nondecreasing, and maps $[0,T)$ to $[0,\infty)$.
To prove the uniform boundedness asserted in Theorem \ref{T1}, consider $\epsilon > 0$ and without loss of generality take $\epsilon < T/2$.
Let $(t,x,v) \in S_\epsilon \bigcap \mathrm{supp}(f).$
If this intersection is, in fact, empty then the uniform bound on derivatives guaranteed by the theorem will merely be $0$.
If $t \leq T - \epsilon$ then
$$ \vert \partial_x f(t,x,v) \vert + \vert \nabla_v f(t,x,v) \vert \leq \Vert \nabla_{x,v} f(T-\epsilon) \Vert,$$
so consider $t \in (T-\epsilon, T)$ and note that
$$\biggl \vert \vert V_1(t) \vert - 1 \biggr \vert = \biggl \vert \vert v_1 \vert - 1 \biggr \vert > \epsilon.$$
From the result of Lemma \ref{L1}, namely \eqref{suppbound}, \eqref{E2Bbound}, and the bound on $E_1$, it follows that
$$\vert K(t,x,v) \vert \leq C$$
on the support of $f$.
Hence, there is $C_1 > 1$ such that $t_\epsilon : = t - \frac{\epsilon}{C_1} \leq s \leq t$ implies
$$\biggl \vert \vert V_1(s) \vert - 1 \biggr \vert > \frac{1}{2} \epsilon.$$
Thus, we have
\begin{equation}
\label{V1bound}
\int_{t_\epsilon}^t \biggl \vert \vert V_1(s) \vert - 1 \biggr \vert^{-1} ds \leq \frac{2}{\epsilon}(t-t_\epsilon) = \frac{2}{C_1}.
\end{equation}

From the Vlasov equation, we have
$$\frac{d}{ds} \left [\partial_x f(s, X(s), V(s)) \right ] = - \left ( \partial_x K \cdot \nabla_v f \right )(s,X(s), V(s)),$$
\begin{equation}
\label{dv1f}
\frac{d}{ds} \left [\partial_{v_1} f(s, X(s), V(s)) \right ] = \left ( -\partial_x f + B \partial_{v_2} f \right )(s,X(s), V(s)),
\end{equation}
\begin{equation}
\label{dv2f}
\frac{d}{ds} \left [\partial_{v_2} f(s, X(s), V(s)) \right ] = - \left (B \partial_{v_1} f \right )(s,X(s), V(s)).\\
\end{equation}
Hence, for $t_2 \in [t_\epsilon, t]$
\begin{eqnarray*}
\partial_x f(t_2, X(t_2), V(t_2)) & = & \partial_x f(t_\epsilon, X(t_\epsilon), V(t_\epsilon)) - \int_{t_\epsilon}^{t_2} \left ( \partial_x K \cdot \nabla_v f \right )(s,X(s),V(s)) \ ds \\
& = &  \partial_x f(t_\epsilon, X(t_\epsilon), V(t_\epsilon))\\
& \ & \ - \int_{t_\epsilon}^{t_2} \partial_x K(s,X(s),V(s)) \cdot \biggl [ \nabla_v f(t_\epsilon, X(t_\epsilon), V(t_\epsilon))\\
& \ & \  + \int_{t_\epsilon}^s \biggl \langle -\partial_x f + B\partial_{v_2} f, -B\partial_{v_1} f \biggr \rangle (u,X(u),V(u)) du \biggr] \ ds
\end{eqnarray*}
Letting 
\begin{equation}
\label{K}
\mcK(u, t_2) = \int_u^{t_2} \partial_x K(s, X(s), V(s)) \ ds
\end{equation}
and changing the order of integration in the last term, this expression becomes
\begin{equation}
\label{derivs}
\begin{aligned}
\partial_x f(t_2, X(t_2), V(t_2)) & = \partial_x f(t_\epsilon, X(t_\epsilon), V(t_\epsilon))\\
& \  - \mcK(t_\epsilon, t_2) \cdot \nabla_v f(t_\epsilon, X(t_\epsilon), V(t_\epsilon)) \\
& \  - \int_{t_\epsilon}^{t_2} \mcK(u, t_2) \cdot \biggl \langle -\partial_x f + B\partial_{v_2} f, -B\partial_{v_1} f \biggr \rangle (u,X(u),V(u)) du.
\end{aligned}
\end{equation}
Now, it remains to bound $\mcK$.  For $\mcK_2$, note that
\begin{eqnarray*}
\partial_x K_2(s,X(s), V(s)) & = & \partial_x E_2(s,X(s)) - V_1(s)\partial_x B(s,X(s))\\
& = & - \partial_t B(s,X(s)) - V_1(s) \partial_x B(s,X(s))\\
& = & - \frac{d}{ds} \biggl [B(s,X(s)) \biggr],
\end{eqnarray*}
so that
\begin{equation}
\label{K2bound}
\vert \mcK_2(u, t_2) \vert = \biggl \vert B(u, X(u)) - B(t_2, X(t_2)) \biggr \vert \leq C.
\end{equation}
For $\mcK_1$, note that
$$\partial_x K_1 = \partial_x E_1 + v_2 \partial_x B$$
and
\begin{equation}
\label{dxE1}
\vert \partial_x E_1(t,x) \vert = \vert \rho(t,x) \vert  \leq C,
\end{equation}
so we focus on the remaining quantity in \eqref{K}, namely
\begin{equation}
\label{v2dxB}
\int_u^{t_2} V_2(s) \partial_xB(s,X(s)) \ ds.
\end{equation}

Since $B$ can be represented as
$$B(t,x) = \frac{1}{2} \biggl [ (E_2 + B)(0,x-t) - (E_2 - B)(0, x+t) - \int_0^t ( j_2(\tau, x - t + \tau) - j_2(\tau, x + t - \tau)) \ d\tau \biggr ], $$
we'll consider
\begin{eqnarray*}
\mcB^\pm(t,x) & := & \int_0^t \partial_x j_2(\tau, x\mp(t-\tau)) \ d\tau\\
& = & \int_0^t \int v_2 \partial_x f(\tau, x \mp(t-\tau), v) \ dv \ d\tau.
\end{eqnarray*}
Then, we can estimate \eqref{v2dxB} by
\begin{equation}
\label{v2dxBbound}
\left \vert \int_u^{t_2} V_2(s) \partial_x B(s,X(s)) \ ds \right \vert \leq C 
+ \frac{1}{2} \left \vert \int_u^{t_2} V_2(s) [ \mcB^+(s, X(s)) - \mcB^-(s, X(s))] \ ds \right \vert.
\end{equation}
Consider the first part of the expression on the right, namely
$$I: = \int_u^{t_2} V_2(s) \mcB^+(s,X(s)) \ ds,$$
as the other term may be handled similarly.
Writing
$$f = f(\tau, X(s) - s + \tau, v) \qquad \mathrm{and} \qquad \partial_x f = \partial_x f(\tau, X(s) - s + \tau, v)$$
we have from the Vlasov equation
$$\partial_x f = \frac{1}{1- v_1}\biggl [\frac{df}{d\tau} + \nabla_v \cdot (fK) \biggr ] = \frac{1}{V_1(s) - 1} \frac{df}{ds}.$$
Take
$$D = (V_1(s) - 1)^2 + (v_1 - 1)^2 \qquad \mathrm{and} \qquad \theta = \frac{(V_1(s) - 1)^2}{D}$$
and write $\partial_x f$ as a convex combination of its two representations, so that
$$ \partial_x f = \frac{\theta}{V_1(s) - 1} \frac{df}{ds} + \frac{1 - \theta}{1-v_1} \biggl [\frac{df}{d\tau} + \nabla_v \cdot (fK) \biggr ].$$
Using this, we split $I$ into three portions given by 
\begin{equation}
\label{I}
\begin{aligned}
I & = \int_u^{t_2} V_2(s) \int_0^s \int v_2 \partial_x f(\tau, X(s) - s + \tau, v) \ dv \ d\tau \ ds\\
& = \int_u^{t_2} V_2(s) \int_0^s \int v_2 \left [\frac{V_1(s) - 1}{D} \frac{df}{ds} + \frac{1-v_1}{D}\frac{df}{d\tau} + \frac{1-v_1}{D} \nabla_v \cdot (fK) \right ] \ dv \ d\tau \ ds\\
& =:  I_S + I_T + I_V.
\end{aligned}
\end{equation}
Note that
\begin{equation}
\label{D}
\int_{\vert v \vert \leq C} \frac{1}{D} \ dv \leq C\int \frac{dv_1}{(V_1(s) - 1)^2 + (v_1 - 1)^2} = \frac{C}{\vert V_1(s) - 1\vert}.
\end{equation}
Using \eqref{D} we have
\begin{equation}
\label{IT}
\begin{aligned}
\vert I_T \vert & = \left \vert \int_u^{t_2} V_2(s) \int v_2 \frac{1-v_1}{D} \left ( f(s,X(s), v) - f(0, X(s) - s, v) \right ) \ dv \ ds \right \vert\\
& \leq C \int_u^{t_2} \int_{\vert v \vert \leq C} \frac{dv \ ds}{D} \leq C \int_u^{t_2} \frac{ds}{\vert V_1(s) - 1 \vert}.
\end{aligned}
\end{equation}

Next, we consider $I_V$.  
Using \eqref{D} again and integrating by parts, we have for the portion involving $\partial_{v_2}$,
\begin{eqnarray*}
\left \vert \int v_2 \frac{1-v_1}{D} \partial_{v_2} (f K_2) \ dv \right \vert & = & \left \vert -\int fK_2 \partial_{v_2} \left ( v_2 \frac{1-v_1}{D} \right ) \ dv \right \vert\\
& = &  \left \vert \int fK_2 \frac{1-v_1}{D}\ dv \right \vert\\
& \leq & C \int_{\vert v \vert \leq C} \frac{1}{D} \ dv\\
& \leq & \frac{C}{\vert V_1(s) - 1 \vert} .
\end{eqnarray*}
For the portion involving $\partial_{v_1}$, we again use \eqref{D} to find
\begin{eqnarray*}
\left \vert \int v_2 \frac{1-v_1}{D} \partial_{v_1} (f K_1) \ dv \right \vert & = & \left \vert -\int fK_1 \partial_{v_1} \left ( v_2 \frac{1-v_1}{D} \right ) \ dv \right \vert\\
& = & \left \vert \int fK_1 v_2 \left (\frac{1}{D} + \frac{1-v_1}{D^2} 2(v_1 - 1) \right )\ dv \right \vert\\
& \leq & \int \vert v_2 \vert f \vert K_1 \vert \left (\frac{1}{D} + \frac{2D}{D^2} \right ) \ dv\\
& \leq & \frac{C}{\vert V_1(s) - 1 \vert}.
\end{eqnarray*}
Combining these estimates we have
\begin{equation}
\label{IV}
\vert I_V \vert \leq \int_u^{t_2} \vert V_2(s) \vert \int_0^s \frac{C}{\vert V_1(s) - 1 \vert} \ d\tau \ ds \leq C\int_u^{t_2} \vert V_1(s) - 1 \vert^{-1} \ ds.
\end{equation}

For $I_S$, more work is needed.
We integrate by parts twice so that
\begin{eqnarray*}
I_S & = & \int_u^{t_2} V_2(s) \biggl [ \frac{d}{ds} \int_0^s \int v_2 \frac{V_1(s) - 1}{D} f \ dv \ d\tau\\
& \ & - \int v_2 \frac{V_1(s) - 1}{D} f(s, X(s), v) \ dv \\
& \ & - \int_0^s \int v_2 \frac{d}{ds} \left ( \frac{V_1(s) - 1}{D} \right ) f \ dv \ d\tau \biggr ] \ ds \\
& = & V_2(s) \int_0^s \int v_2 \frac{V_1(s) - 1}{D} f \ dv \ d\tau \biggr \vert_{s = u}^{s = t_2} \\
& \ & - \int_u^{t_2} \frac{dV_2}{ds} \int_0^s \int v_2 \frac{V_1(s) - 1}{D} f \ dv \ d\tau \ ds \\
& \ & - \int_u^{t_2} V_2(s) \int v_2 \frac{V_1(s) - 1}{D} f(s,X(s), v) \ dv \ ds\\
& \ & - \int_u^{t_2}  V_2(s) \int_0^s \int v_2 \left ( \frac{1}{D} - \frac{V_1(s) - 1}{D^2}2(V_1(s) - 1) \right ) \frac{dV_1}{ds} f \ dv \ d\tau \ ds \\
& =: & I_S^1 + I_S^2 + I_S^3+ I_S^4.
\end{eqnarray*}
Note that
$$\int_{\vert v\vert \leq C} \frac{\vert V_1(s) - 1 \vert}{D} \ dv \leq C\int \frac{\vert V_1(s) - 1 \vert}{(v_1 - 1)^2 + \vert V_1(s) - 1 \vert^2} \ dv_1 \leq C$$
and it follows that
\begin{equation}
\label{IS123}
\vert I_S^1 \vert + \vert I_S^2 \vert + \vert I_S^3 \vert \leq C.
\end{equation}
Also, by \eqref{D} we have
\begin{equation}
\label{IS4}
\begin{aligned}
\vert I_S^4 \vert & \leq C\int_u^{t_2} \vert V_2(s) \vert \int_0^s \int_{\vert v \vert \leq C} \vert v_2 \vert \left ( \frac{1}{D} + \frac{2D}{D^2} \right ) \ dv \ d\tau \ ds\\
& \leq C\int_u^{t_2} \vert V_1(s) - 1 \vert^{-1} \ ds.
\end{aligned}
\end{equation}
Using \eqref{IT}, \eqref{IV}, \eqref{IS123}, and \eqref{IS4} in \eqref{I} yields
$$\vert I \vert \leq C \left ( 1 + \int_u^{t_2} \vert V_1(s) - 1 \vert^{-1} \ ds \right ).$$
Similar steps show that
$$\left \vert \int_u^{t_2} V_2(s) \mcB^-(s, X(s)) \ ds \right \vert \leq C \left ( 1 + \int_u^{t_2} \vert V_1(s) + 1 \vert^{-1} \ ds \right )$$
and hence \eqref{v2dxBbound} yields
$$\left \vert \int_u^{t_2} V_2(s) \partial_x B(s,X(s)) \ ds \right \vert \leq C \left ( 1 + \int_u^{t_2} \left [ \vert V_1(s) - 1 \vert^{-1} + \vert V_1(s) + 1 \vert^{-1} \right ] \ ds \right ).$$
By \eqref{V1bound} we have
$$\left \vert \int_u^{t_2} V_2(s) \partial_x B(s,X(s)) \ ds \right \vert \leq C.$$
Using this bound, \eqref{K2bound}, and \eqref{dxE1} within \eqref{K} yields
$$\vert \mcK(u, t_2) \vert \leq C.$$

Returning to \eqref{derivs}, we find
$$\begin{aligned}
\left \vert \partial_x f(t_2, X(t_2), V(t_2)) \right \vert & \leq \left \vert \partial_x f(t_\epsilon, X(t_\epsilon), V(t_\epsilon)) \right \vert \\
& \ + C \left \vert \nabla_v f(t_\epsilon, X(t_\epsilon), V(t_\epsilon)) \right \vert \\
& \  + C\int_{t_\epsilon}^{t_2} \left ( \vert \partial_x f \vert + \vert \nabla_v f \vert \ \right ) (u,X(u),V(u)) du\\
& \leq C \left ( \Vert \nabla_{x,v} f(t_\epsilon) \Vert + \int_{t_\epsilon}^{t_2} \left ( \vert \partial_x f \vert + \vert \nabla_v f \vert \ \right ) (u,X(u),V(u)) du \right ).
\end{aligned}$$
From \eqref{dv1f} and \eqref{dv2f} and the previous bounds it follows that
$$\left \vert \nabla_{x,v} f(t_2, X(t_2), V(t_2)) \right \vert \leq C\Vert \nabla_{x,v} f(t_\epsilon) \Vert
+ C \int_{t_\epsilon}^{t_2} \left \vert \nabla_{x,v} f (u,X(u),V(u)) \right \vert du.$$
By Gronwall's inequality, we finally have
\begin{eqnarray*}
\left \vert \nabla_{x,v} f(t, X(t), V(t)) \right \vert & \leq & C \Vert \nabla_{x,v} f(t_\epsilon) \Vert e^{C(t- t_\epsilon)}\\
& \leq & C \Vert \nabla_{x,v} f(t_\epsilon) \Vert e^{CT}\\
& \leq & C \Vert \nabla_{x,v} f(t_\epsilon) \Vert\\
& \leq & C \Vert \nabla_{x,v} f(T -\epsilon/C_1) \Vert.
\end{eqnarray*}
From this, the conclusion of Theorem \ref{T1} follows, and the proof is complete.
\end{proof}

\section{Global existence for symmetric initial data}

Though it remains unknown as to whether any smooth triple of initial data $(f^0, E_2^0, B^0)$ launches a global-in-time classical solution $(f,E_2,B)$ of \eqref{1.5DVM}, we may show that a particular class of solutions exists globally in time.
In particular, we will show that symmetry of the initial distribution in $v_2$ is preserved in time and gives rise to a smooth, unique, global solution, which satisfies the one-dimensional Vlasov-Poisson system.
Throughout this final section we will split the $v$-dependence of the distribution function into its components $v_1$ and $v_2$ for clarity.
  
\begin{theorem}
Assume the initial particle distribution $f^0(x,v_1,v_2)$ satisfies $$f^0(x, v_1, v_2) = f^0(x, v_1, -v_2)$$ for every $x, v_1, v_2 \in \bfR$, while the initial fields satisfy $E_2^0(x) = B^0(x) = 0$ for every $x \in \bfR$. Then, there is a unique $f \in C^1\left ([0,\infty) \times \bfR^3 \right )$ satisfying \eqref{1.5DVM} and $$f(t,x,v_1,v_2) = f(t,x,v_1,-v_2)$$ for all $t \in [0,\infty)$, $x,v_1,v_2 \in \bfR$.
In particular, $E_2(t,x) = B(t,x) = 0$ for all $t \in [0,\infty)$, $x \in \bfR$ and $f$ satisfies the one-dimensional Vlasov-Poisson system in the variables $(t,x,v_1)$ with $v_2$ as a parameter, namely
$$ \left \{ \begin{gathered}
 \partial_t f + v_1 \partial_x f + E_1 \partial_{v_1} f = 0.\\
\partial_x E_1 = \rho(t,x) = \int f dv - b(x), \qquad \partial_t E_1 = - j_1 = - \int v_1 f \ dv. \\
\end{gathered} \right.$$ 
\end{theorem}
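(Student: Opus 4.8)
The plan is to show that any $C^1$ solution of \eqref{1.5DVM} emanating from such data must have $E_2 \equiv B \equiv 0$, so that the force collapses to $K = \langle E_1, 0 \rangle$, the characteristic equation $\frac{dV_2}{ds} = E_2(s,X) - V_1 B(s,X)$ becomes $\frac{dV_2}{ds} = 0$, and the Vlasov equation reduces to the one-dimensional Vlasov--Poisson system in $(t,x,v_1)$ with $v_2$ entering only as an inert label. Global solvability then follows from the classical theory for one-dimensional Vlasov--Poisson, since once $B \equiv 0$ the wave coupling responsible for the critical velocities $\vert v_1 \vert = 1$ disappears entirely and the velocity support can no longer interact with signals travelling at speed $c = 1$.

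The mechanism that produces $E_2 \equiv B \equiv 0$ is a reflection symmetry of \eqref{1.5DVM}. Given a solution $(f, E_1, E_2, B)$, I would introduce the reflected quadruple
$$\tilde f(t,x,v_1,v_2) := f(t,x,v_1,-v_2), \quad \tilde E_1 := E_1, \quad \tilde E_2 := -E_2, \quad \tilde B := -B,$$
and check term by term that it again solves \eqref{1.5DVM}. The Vlasov equation is invariant because the sign flips in $\tilde E_2$ and $\tilde B$ exactly compensate the chain-rule sign produced by $\partial_{v_2}$ and the replacement $v_2 \mapsto -v_2$ in the forces $E_1 + v_2 B$ and $E_2 - v_1 B$; the equations for $E_1$ are unchanged because $\int \tilde f \, dv = \int f \, dv$ and $\int v_1 \tilde f \, dv = \int v_1 f \, dv$; and the pair of equations for $(E_2, B)$ is preserved because $\tilde j_2 = \int v_2 \tilde f \, dv = -j_2$ flips sign in concert with $\tilde E_2$ and $\tilde B$. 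Since $f^0$ is even in $v_2$ and $E_2^0 = B^0 = 0$, the reflected quadruple carries the same initial data, so local uniqueness of $C^1$ solutions to \eqref{1.5DVM} forces $\tilde f = f$, $\tilde E_2 = E_2$, and $\tilde B = B$. The first identity says $f$ is even in $v_2$ for all time, and the last two force $E_2 \equiv B \equiv 0$.

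With this reduction in hand, existence and uniqueness are routine. For existence I would construct the solution directly: solve the one-dimensional Vlasov--Poisson system for $(f, E_1)$ globally, carrying $v_2$ as a parameter through $f(t,x,v_1,v_2) = f^0(X(0), V_1(0), v_2)$ along the characteristics $\dot X = V_1$, $\dot V_1 = E_1(s,X)$; observe that evenness of $f^0$ in $v_2$ is inherited by $f$ exactly as above, so that $j_2 = \int v_2 f \, dv \equiv 0$; and finally note that $E_2 \equiv B \equiv 0$ then solves $\partial_t E_2 = -\partial_x B - j_2$, $\partial_t B = -\partial_x E_2$ with zero initial data, so that $(f, E_1, 0, 0)$ is a genuine global $C^1$ solution of the full system. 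Uniqueness within the even class follows because evenness already forces $j_2 = 0$, hence $E_2 = B = 0$ by the homogeneous wave system with zero data, collapsing the problem to one-dimensional Vlasov--Poisson, where the solution is unique.

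The main obstacle is the apparent circularity: killing $E_2$ and $B$ requires $j_2 = 0$, which requires $f$ to stay even in $v_2$, which in turn requires the Vlasov forces to decouple, i.e. requires $E_2 = B = 0$. A naive in-time continuation argument must confront this feedback directly. The reflection-plus-uniqueness argument above severs the loop in a single step, and the only genuine care needed is the bookkeeping of the sign conventions in the reflected fields and the verification that, with $B \equiv 0$, the reduced problem truly falls under the global one-dimensional Vlasov--Poisson theory rather than the delicate regime governed by Lemma \ref{L1} and Theorem \ref{T1}.
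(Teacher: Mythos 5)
Your proposal is correct and follows essentially the same route as the paper: both exploit the reflection $(f,E_1,E_2,B) \mapsto (f(\cdot,\cdot,\cdot,-v_2),\, E_1,\, -E_2,\, -B)$ together with uniqueness of $C^1$ solutions of \eqref{1.5DVM} to conclude that $f$ remains even in $v_2$ and that $E_2 \equiv B \equiv 0$, and then both reduce the problem to the one-dimensional Vlasov--Poisson system (with $v_2$ as a parameter), whose known global theory finishes the argument. Your explicit construction of the global solution --- solving one-dimensional Vlasov--Poisson and verifying that $(f,E_1,0,0)$ solves the full system --- makes the existence step slightly more self-contained than the paper's closing remark, but the key mechanism is identical.
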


\begin{proof}
Let $g(t,x,v_1,v_2) = f(t, x,v_1,-v_2)$ for all $t \in [0,T)$, $x, v_1, v_2 \in \bfR$ so that 
\begin{equation}
\label{ICg}
g(0,x,v_1,v_2) = f^0(x,v_1,-v_2) = f^0(x,v_1,v_2).
\end{equation}
Then, define $j_2^g(t,x) = \int w_2 g(t,x,w_1,w_2) \ dw_2 dw_1$ and note that, upon changing variables
\begin{eqnarray*}
j_2(t,x) & = & \int v_2 f(t,x,v_1,v_2) \ dv\\
& = & \iint v_2 g(t,x,v_1,-v_2) \ dv_2 \ dv_1\\
& = & -\iint w_2 g(t,x,w_1,w_2) \ dw_2 \ dw_1\\
& = & - j_2^g(t,x).
\end{eqnarray*}

Let $E_2^g(0,x) = B^g(0,x) = 0$, and for $t \in (0,T)$ define $E_2^g(t,x)$ and $B^g(t,x)$ as the unique solutions of
\begin{equation}
\label{E2B}
\left \{ \begin{gathered}
\partial_t E_2^g + \partial_x B^g = -j_2^g\\
\partial_t B^g + \partial_x E_2^g = 0.
\end{gathered} \right.
\end{equation}
Then, we see that these functions satisfy
$$ \left \{ \begin{gathered}
\partial_t(-E_2^g) + \partial_x(-B^g) = j_2^g = - j_2\\
\partial_t(-B^g) + \partial_x(-E_2^g) = 0.
\end{gathered} \right. $$
Hence, the pair $(-E_2^g, -B^g)$ satisfies the same system of PDEs with the same initial conditions as the pair $(E_2, B)$.  By uniqueness $E_2^g(t,x) = -E_2(t,x)$ and $B^g(t,x) = -B(t,x)$ for all $t \in [0,T), x \in \bfR$.
Since a change of variable in $v_2$ does not affect other quantities in the system, we define $\rho^g, j_1^g$, and $E_1^g$, and note that they are equal to their $f$-dependent counterparts, so that
$$\begin{gathered}
\rho^g(t,x) := \int g(t,x,v_1,v_2) \ dv_2 dv_1 - b(x)  = \rho(t,x)\\
j_1^g(t,x) := \int v_1 g(t,x,v_1,v_2) \ dv_2 dv_1 = j_1(t,x)\\
\partial_x E_1^g(t,x) := \rho^g(t,x) = \rho(t,x).
\end{gathered}$$
Then, we apply the Vlasov operator in the variables $(t,x,w_1,w_2)$ to the function $g(t,x,w_1,w_2)$ 
to find
$$\begin{gathered}
\partial_t g + w_1 \partial_x g + (E_1^g + w_2 B^g)\partial_{w_1} g + (E_2^g - w_1 B^g) \partial_{w_2} g\\ 
= \partial_t f + w_1 \partial_x f + (E_1 - w_2 B )\partial_{v_1} f - (-E_2 + w_1 B) \partial_{v_2} f 
\end{gathered}$$
where the function $f=f(t,x,v_1,v_2)$ is evaluated at the point $(v_1, v_2) = (w_1,-w_2)$.
Relabeling the velocity arguments in this equation using $v_1 = w_1$ and $v_2 = -w_2$, we find further
$$\begin{gathered}
\partial_t g + w_1 \partial_x g + (E_1^g + w_2 B^g)\partial_{w_1} g + (E_2^g - w_1 B^g) \partial_{w_2} g\\ 
= \partial_t f + v_1 \partial_x f + (E_1 + v_2 B )\partial_{v_1} f + (E_2 - v_1 B) \partial_{v_2} f \hfill \\
= 0. \hfill
\end{gathered}$$
Hence, $g$ satisfies the Vlasov equation with initial condition \eqref{ICg}.  Additionally, $E_2^g$ and $B^g$ satisfy the analogous transport equations as $E_2$ and $B$, namely \eqref{E2B}, with the same initial conditions $E_2^g(0,x) = 0 = E_2(0,x)$ and $B^g(0,x) = 0 = B(0,x)$.
By uniqueness, we find $(g,E_2^g,B^g) \equiv (f,E_2,B)$, which in particular implies
$$f(t,x,v_1,-v_2) = f(t,x,v_1,v_2)$$
for every $t \in [0,T)$, $x, v_1, v_2 \in \bfR$.
Since $f$ is even in $v_2$, the function $v_2 f$ is odd and we find $j_2(t,x) = \iint v_2 f(t,x,v_1, v_2) \ dv_2 dv_1 = 0$.  Additionally, we have $E_2^g(t,x) = E_2(t,x)$ and $E_2^g(t,x) = -E_2(t,x)$, which implies $E_2 \equiv 0$.
As the same equalities hold for $B$, we conclude $B \equiv 0$ as well.
Finally, using these field representation within the Vlasov equation, we see that $f$ satisfies the reduced equation
$$ \partial_t f + v_1 \partial_x f + E_1 \partial_{v_1} f = 0.$$
We note that only the last term on the left side is nonlinear, and hence the coupling between the unknown field $E_1$ and the particle distribution only occurs via the remaining equations
$$ \partial_x E_1 = \rho \qquad \mathrm{\ and \ } \qquad \partial_t E_1 = - j_1.$$  
The resulting system is exactly the one-dimensional Vlasov-Poisson system 
with $v_2$ as a parameter rather than an independent variable. As this system is known to possess a global classical solution, the conclusion of the theorem follows.

\end{proof}


\bibliographystyle{acm}

\begin{thebibliography}{99}



\bibitem{DPL} DiPerna, R. J. and Lions, P.L., Global weak solutions of {V}lasov-{M}axwell systems, Comm. Pure Appl. Math., 42 (6): 729--757 (1989).


\bibitem{Glassey} Glassey, R. \textbf{The {C}auchy Problem in Kinetic Theory}, Society for Industrial and Applied Mathematics (SIAM): Philadelphia, PA (1996).

\bibitem{GPSRVM}   Glassey, R., Pankavich, S., and Schaeffer, J., Large Time Behavior of the Relativistic Vlasov-Maxwell System in Low Space Dimension.  Differential and Integral Equations, 23: 61-77 (2010).

\bibitem{KRM}   Glassey, R., Pankavich, S., and Schaeffer, J., Long-time Behavior of Monocharged and Neutral Plasmas in ‘One and one-half ’ Dimensions. Kinetic and Related Models 2: 465-488 (2009).

\bibitem{MMAS}   Glassey, R., Pankavich, S., and Schaeffer, J., Decay in Time for a One-Dimensional, Two Component Plasma. Math. Meth. Appl. Sci. 31: 2115–2132 (2008).


\bibitem{GlaSch90} Glassey, R. and Schaeffer, J., On the ``one and one-half dimensional'' relativistic {V}lasov-{M}axwell system, Math. Methods Appl. Sci., 13 (2): 169--179 (1990).

\bibitem{GlaSch97} Glassey, R. and Schaeffer, J., The ``two and one-half-dimensional'' relativistic Vlasov Maxwell system, Comm. Math. Phys. 185 (2): 257--284 (1997)

\bibitem{GlaSch981} Glassey, R. and Schaeffer, J., The relativistic Vlasov-Maxwell system in two space dimensions I, Arch. Rational Mech. Anal. 141(4): 331--354 (1998)

\bibitem{GlaSch982} Glassey, R. and Schaeffer, J., The relativistic Vlasov-Maxwell system in two space dimensions II, Arch. Rational Mech. Anal. 141(4): 355--374 (1998)

\bibitem{GlStr} Glassey, R. and Strauss, W., Singularity formation in a collisionless plasma could occur only at high velocities, Arch. Rational Mech. Anal., 92 (1): 59--90 (1986).

\bibitem{GlaStr} Glassey, R. and Strauss, W., Remarks on collisionless plasmas. Fluids and plasmas: geometry and dynamics (Boulder, CO., 1983), 269–279, Contemp. Math., 28, Amer. Math. Soc., Providence, RI, (1984)



\bibitem{LP} Lions, P. and Perthame, P., Propagation of moments and regularity for the 3-dimensional Vlasov-Poisson system. 
Invent. Math. 105: 415–430 (1991).


\bibitem{PN} Pankavich, S. and Nguyen, C., A One-dimensional Kinetic Model of Plasma Dynamics with a Transport Field". Evolution Equations and Control Theory 3: 681-698 (2014)

\bibitem{Pfaff} Pfaffelmoser, K. Global classical solutions of the Vlasov-Poisson system in three dimensions for general initial data. J. Differential Equations 95: 281–303 (1992).

\bibitem{Jack} Schaeffer, J., Global Existence of Smooth Solutions to the Vlasov-Poisson System in Three Dimensions. Commun. PDE 16: 1313–1335 (1991).


\bibitem{VKF} van Kampen, N.G. and Felderhof, B.U. Theoretical Methods in Plasma Physics, Wiley: New York, NY (1967).


\end{thebibliography}

\end{document}